\newcommand\Aon{\RenewEnviron{commentA}{\color{purple}\BODY}}
\newtheorem{theorem}{Theorem}[section]
\newtheorem{proposition}[theorem]{Proposition}
\newtheorem{corollary}[theorem]{Corollary}
\newtheorem{lemma}[theorem]{Lemma}
\theoremstyle{definition}
\newtheorem{definition}[theorem]{Definition}
\newtheorem{example}[theorem]{Example}
\newtheorem{question}[theorem]{Question}
\newtheorem{remark}[theorem]{Remark}
\newcommand{\Q}{{\mathbb Q}}
\newcommand{\C}{{\mathbb C}}
\renewcommand{\P}{{\mathbb P}}
\renewcommand{\O}{{\mathcal O}}
\newcommand{\on}[1]{\operatorname{#1}}
\newcommand{\Pic}{{\mathbf{Pic}}}
\newcommand{\Ac}{{\mathcal A}}
\newcommand{\Xc}{{\mathcal X}}
\newcommand{\Lc}{{\mathcal L}}
\newcommand{\irr}{{\on{irr}}}
\DeclareMathOperator{\Bir}{\mathrm{Bir}}
\DeclareMathOperator{\Frac}{\mathrm{Frac}}
\subjclass[2020]{14E05, 14L30}
\title{Group actions and irrationality in surface families}
\author{Nathan Chen}
\address{Department of Mathematics, Harvard University, 1 Oxford Street, Cambridge, MA 02138}
\email{nathanchen@math.harvard.edu}
\author{Louis Esser}
\address{Department of Mathematics, Princeton University, Fine Hall, Washington Road, Princeton, NJ 08544-1000, USA}
\email{esserl@math.princeton.edu}
\begin{document}
\maketitle

\begin{abstract}
Rationality specializes in families of surfaces, even with mild singularities.  In this paper, we study the analogous question for the degree of irrationality. We prove a specialization result when the degree of irrationality on the generic fiber arises from the quotient by a group action.
\end{abstract}

\section{Introduction}

The goal of this paper is to explore some variational properties of measures of irrationality in families. We will focus on the \textit{degree of irrationality}, which for a projective variety $X$ is defined as
\[ \irr(X) \colonequals \min \left\{ \delta > 0 \ \middle| \ \exists \text{ a dominant rational map } X \dashrightarrow \P^{\dim X} \text{ of degree $\delta$ } \right\}. \]
When $X$ is a curve, this coincides with the \textit{gonality} of its normalization.

For a family of integral curves it follows from the classical theory of limit linear series or from \cite[Theorem B]{CS20} that the degree of irrationality cannot jump up along special fibers. However, in higher dimensions it is unclear whether this is always the case, even when we impose smoothness:

\begin{question}\label{ques:inequalitydegirr}
    Let $\Xc \rightarrow B$ be a smooth proper morphism over $\C$ to a smooth connected curve with marked point $0 \in B$. If the very general fiber $\Xc_{b}$ satisfies $\irr(\Xc_{b}) \leq d$, is it true that $\irr(\Xc_{0}) \leq d$?
\end{question}

\noindent When $d = 1$, Question~\ref{ques:inequalitydegirr} has a positive answer by the work of Kontsevich and Tschinkel \cite{KT19}, who used motivic integration techniques to show that rationality specializes in smooth families. For larger values of $d$, very little is known in this direction beyond \cite[Proposition C]{CS20}, where the authors prove that \Cref{ques:inequalitydegirr} also has a positive answer for smooth families of (i) surfaces with $h^{1,0} = 0$, or (ii) strict Calabi-Yau threefolds. That is, the degree of irrationality cannot increase at special points in these types of families. 

Our main result shows the following:

\begin{theorem}
\label{thm:main}
Let $\pi: \Xc \rightarrow B$ be a flat proper morphism over $\C$ to a smooth connected curve $B$. Assume that all of the fibers are integral klt surfaces. If the very general fiber $\Xc_b$ of $\pi$ admits a dominant rational map $\Xc_b \dashrightarrow \mathbb{P}^{2}$ of degree $d$ such that the corresponding extension of function fields is
Galois, then $\mathrm{irr}(\Xc_b) \leq d$ for all $b \in B$.
\end{theorem}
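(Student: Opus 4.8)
The key hypothesis is that on the very general fiber, the degree-$d$ map to $\mathbb{P}^2$ comes from a Galois extension, i.e., there is a finite group $G$ of order $d$ acting on $\Xc_b$ (birationally) with quotient birational to $\mathbb{P}^2$. So I want to spread this out and specialize it.

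**Step 1: Spread out the group action.**
After shrinking $B$ (which is harmless — I only need to handle the finitely many "bad" points, and near the very general point the statement is automatic), I want to find a generically finite group action on the total space. Concretely: the very general fiber being Galois over $\mathbb{P}^2$ with group $G$ means there is a dense open $U \subseteq B$ and a $G$-action on $\Xc_U := \pi^{-1}(U)$ over $U$, together with a birational map $\Xc_U / G \dashrightarrow \mathbb{P}^2_U$ over $U$. The subtlety is that "very general" means the Galois property holds only off a countable union of proper closed subsets of $B$; but $B$ is a curve, so a countable union of proper closed subsets is a countable set of points, and after removing it we still have a dense open $U$ over which everything is defined. (This is the one place the curve hypothesis on $B$ is genuinely used.) I spread out $G$, its action, and the birational equivalence $\Xc_U/G \sim \mathbb{P}^2_U$ in the usual way.

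**Step 2: Take the quotient family and specialize.**
Form $\Yc := \Xc_U / G \to U$. Each fiber $\Yc_u$ is birational to $\mathbb{P}^2$ for $u \in U$. Now I want to extend $\Yc$ to a family over all of $B$ whose special fiber is still rational. This is where I invoke the Kontsevich–Tschinkel specialization of rationality: I need a \emph{smooth proper} model of $\Yc$ over a neighborhood of the bad point $0$. The quotient $\Yc$ may be singular, so I resolve: after further shrinking $U$ and doing a semistable-type reduction / resolution and running MMP relative to $B$, I produce a proper family $\widetilde{\Yc} \to B'$ (possibly after a finite base change $B' \to B$) with smooth total space, smooth fibers over the preimage of $U$, all of which are rational surfaces, and a fiber over a point $0'$ lying over $0$. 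By Kontsevich–Tschinkel \cite{KT19}, rationality specializes in smooth proper families, so the special fiber $\widetilde{\Yc}_{0'}$ is rational too. (Finite base change is fine since it only changes $\irr(\Xc_0)$ by pulling back, and $\irr$ of a fiber is a birational invariant of that single fiber — a base change doesn't change the isomorphism class of the fiber over a point above $0$, so I actually work on $\Xc_{B'} := \Xc \times_B B'$ throughout.)

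**Step 3: Reconstruct the degree-$d$ map on the special fiber.**
Now I need to build a $G$-cover of the special fiber $\widetilde{\Yc}_{0'} \cong \mathbb{P}^2$. The idea: the $G$-action on $\Xc_U$ and the quotient map $\Xc_{B'} \dashrightarrow \widetilde{\Yc}$ are defined over the open set; I want to take the normalization of $\widetilde{\Yc}$ (or of an appropriate model) in the function field $\C(\Xc_{B'})$, giving a finite degree-$d$ morphism $\Zc \to \widetilde{\Yc}$ extending the one over $U$. Restricting to the special fiber gives a degree-$d$ (possibly reducible/non-reduced) cover $\Zc_{0'} \to \widetilde{\Yc}_{0'} \cong \mathbb{P}^2$. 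I then need: (a) that $\Zc_{0'}$ contains a component birational to $\Xc_{0'} \cong \Xc_0$, which I get from flatness of $\pi$ and integrality of the fibers — $\Xc_{B'}$ is the closure of its generic fiber, hence dominates $\Zc$ birationally, so the special fibers match up to birational modification; and (b) a degree bound. If $\Zc_{0'}$ is irreducible and generically reduced, then $\Xc_0 \dashrightarrow \Zc_{0'} \to \mathbb{P}^2$ has degree $\le d$ and we are done. The remaining work is to rule out, or work around, a drop: if $\Zc_{0'} \to \mathbb{P}^2$ factors through a smaller-degree cover on the relevant component, that only \emph{helps} (degree $< d$); the genuinely dangerous case is $\Zc_{0'}$ being non-reduced along the component birational to $\Xc_0$. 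Here I expect to use the klt hypothesis on $\Xc_0$: klt singularities are rational and in particular the total space $\Xc_{B'}$ has mild (e.g.\ CM, $R_1+S_2$) singularities, enough to control the normalization and force the cover to be generically reduced over the special fiber — or alternatively to apply an Abhyankar/purity argument to the branch locus.

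**Main obstacle.**
The crux is Step 3(b): controlling the behavior of the degree-$d$ cover over the special point, i.e., ensuring the component of $\Zc_{0'}$ dominating $\Xc_0$ maps to $\mathbb{P}^2$ with degree exactly (not more than, that's fine, but also genuinely covering) $\le d$, and in particular that no multiplicity is lost by passing to a reduced structure in a way that would break the covering. The klt assumption on the fibers — which forces the total space to have controlled, rational singularities after the MMP/resolution in Step 2 — is what I expect to be the essential input that makes the cover behave well in the limit; formulating the right statement (likely: the normalization of $\mathbb{P}^2$ in $\C(\Xc_{B'})$ has special fiber generically reduced and irreducible over the component of interest, using that klt $\Rightarrow$ rational $\Rightarrow$ Du Bois and flatness/base-change for the pushforward of the structure sheaf) and pinning down exactly how much of the klt hypothesis is needed is the technical heart of the argument.
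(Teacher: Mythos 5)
Your overall strategy (spread out the Galois group $G$, pass to the quotient family, and show its limit is rational) is in the right spirit, but two of your key steps do not work as stated, and they are precisely where the paper has to do real work. First, in Step 2 the appeal to Kontsevich--Tschinkel is invalid: their specialization theorem requires a \emph{smooth} proper morphism over a neighborhood of the marked point, so the special fiber itself must be smooth. Resolving the quotient family or running an MMP gives you a smooth total space and smooth fibers over the open set $U$, but the fiber over $0'$ will in general be a singular or reducible degenerate fiber, and there is no way to arrange a smooth proper model whose fiber over $0'$ is smooth --- if one could, specialization questions of this kind would be trivial. So rationality of the limit of the quotient family is not obtained this way. (Also, your Step 1 justification is off: the complement of a countable set of points in $B$ is not Zariski open; the correct route, which the paper takes, is to identify a very general fiber with the geometric generic fiber and spread out after a finite base change. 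The conclusion of Step 1 is fine, though.)

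Second, Step 3(a) is unjustified: for proper models over a curve, being birational over $B'$ does \emph{not} imply that the special fibers ``match up to birational modification,'' so there is no guarantee that $\Zc_{0'}$ has a component birational to $\Xc_0$; birational modifications of threefolds over a curve can replace the special fiber wholesale. Getting the quotient construction to interact with the special fiber at all is a genuine issue, and the paper handles it by specializing the birational $G$-action to $\Xc_0$ itself, which requires $\Xc_0$ to be non-uniruled together with the fact (proved via Hacon--M\textsuperscript{c}Kernan's rational connectedness theorem) that the total space of a klt-fibered family has sustained uniruled modifications; ruled special fibers are treated separately by an Albanese/gonality argument (\Cref{ruled}). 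With the action specialized, the paper then proves rationality of $\Xc_0/G$ directly, not via a specialization-of-rationality theorem: Matsusaka's theorem shows the quotient of the special fiber is ruled, and its irregularity vanishes because $h^1(\O_{Y_b/G}) = \dim H^1(\Xc_b,\O_{\Xc_b})^G$ is constant in $b$ --- this uses Grauert's theorem, an equivariant resolution making $R^1\pi_*\O_{\Xc}$ a $G$-equivariant bundle, and Kraft's rigidity of families of representations of a reductive group --- and equals $0$ on the very general fiber. Your ``main obstacle'' (non-reducedness of the cover in the limit) is therefore not where the real difficulty lies; the difficulties are specializing the group action to the special fiber and proving rationality (not just ruledness) of its quotient, and the klt hypothesis enters through the uniruled-modifications and rational-singularities arguments rather than through reducedness of a normalized cover.
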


\noindent In other words, if the degree of irrationality of the very general fiber can be birationally computed as a Galois cover, then the degree of irrationality can only go down under specialization. From the point of view of singularities, \Cref{thm:main} is sharp in the sense that if we weaken ``klt" to ``log canonical," the statement is false, even when $d = 1$. For example, consider a pencil of smooth cubic surfaces specializing to a cone over an elliptic curve.  Every smooth cubic surface over $\C$ is rational, while the special fiber is log canonical but not rational (see also \Cref{spec_rationality}).

Since degree $2$ maps are always Galois (birationally), our theorem also implies in particular that the property of having degree of irrationality at most $2$ specializes in families of klt surfaces (\Cref{cor:spec_irr2}). On the other hand, it is plausible that \Cref{thm:main} fails for $d \geq 3$ without the Galois assumption; for instance, there are candidate (but unproven) examples of smooth families of surfaces where $\irr(X)$ may jump from $3$ on the general fiber to $4$ on the special fiber (see \Cref{abel_surf}).

Note that the degree of irrationality can in fact drop at special points in smooth families of any dimension. For example, consider a smooth family of curves specializing to a curve with lower gonality or a generic degree $d \geq 4$ surface in $\P^{3}$ specializing to one containing two disjoint lines. 

The key idea behind \Cref{thm:main} is to specialize the rational $G$-action to the special fiber
and analyze the geometry of the quotient using the $G$-action on the cohomology of the structure sheaf of the original family.
In particular, we show that the $G$-invariant subspace of certain cohomology
groups is constant in klt family of varieties of \textit{any}
dimension with a 
rational $G$-action (\Cref{ginvariants}). In smooth families, this also holds for $G$-invariant global differential forms and pluricanonical forms (\Cref{rem:forms}). 

\subsection*{Conventions and Preliminaries}

Throughout we work over $\C$. $R$ will denote a DVR with field of fractions $K = \text{Frac} \ R$ and residue field $k$ of characteristic $0$.

By \textit{variety}, we mean an integral separated scheme of finite type over a field. A \textit{curve} is a one-dimensional variety, and a \textit{surface} is a two-dimensional variety. A variety $X$ is \textit{ruled} if it is birational to $\P^{1} \times Y$. We will use $(B, 0)$ to denote a pointed smooth connected curve, and $b \in B$ will always mean a closed point.

\subsection*{Acknowledgements} We are grateful to Dori Bejleri, Benjamin Church, Johan de Jong, Christopher Hacon, Andres Fernandez Herrero, Lena Ji, S\'{a}ndor Kov\'{a}cs, John Sheridan,  David Stapleton, and Burt Totaro for valuable discussions. We would also like to thank J\'{a}nos Koll\'{a}r for suggesting an approach to Lemma~\ref{uniruledmod}.

\section{Group quotients and invariant sections}

The purpose of this section is to prove a series of results necessary for the proof
of \Cref{thm:main}.  These culminate in \Cref{ginvariants},
which proves that $G$-invariants of certain cohomology groups
under rational actions have constant dimension in families.
This holds for fibers of any dimension and may be useful in other contexts.

We will frequently use the fact that klt varieties have rational singularities \cite[Theorem 5.22]{KM98}. In order to discuss specialization results for group actions, we will need to recall the following singularity notion \cite[Def. 1.1, Def. 1.5]{CS24}:

\begin{definition}\label{defn:sustained-modifications}
A normal scheme \(X\) has \emph{(uni)ruled modifications} if every exceptional divisor of every normal birational modification \(Y\to X\) is (uni)ruled.
A normal scheme $X_R$ over a DVR $R$ has \textit{sustained (uni)ruled modifications} if there exists a generically finite extension of DVRs $R\subseteq R'$ such that for every generically finite extension of DVRs $R'\subseteq S$, the normalization of $X_{S}$ has (uni)ruled modifications. Here, the ring extension \(R\subseteq R'\) being generically finite means that \(\Frac R'\) is a finite algebraic extension of \(K\). For a family $\pi \colon \Xc \rightarrow B$ to have (sustained) (uni-)ruled modifications at $b \in B$ means that after localizing at $b$ (with local ring $R$), the restriction of the family $\pi$ to $\Xc_{R}$ has the corresponding property.
\end{definition}

We use the following conventions for birational actions.

\begin{definition}
Given a morphism of integral $F$-schemes $f: X \rightarrow T$, where $F$ is a field,
the group $\mathrm{Bir}_{F}(X/T)$ of \textit{birational automorphisms} 
of $X/T$ is the group of birational maps $X \dashrightarrow X$ which 
commute with $f$.  A \textit{birational action} of a group $G$ on $X/T$ 
(or a birational $G$-action) is an abstract injective
group homomorphism $G \rightarrow \mathrm{Bir}_F(X/T)$.
\end{definition}

Throughout the paper, we treat $\Bir_F(X/T)$ as an abstract group; we
also often suppress the subscript $F$, where $F = \C$ is understood.
Finally, we note the following fact about regularization of 
rational actions by finite groups.

\begin{proposition}
\label{prop:regularization}
Let $X \rightarrow T$ be a proper morphism of varieties over a field 
$F$ of characteristic zero. Suppose $G$ is a finite group with a birational action on $X/T$. Then there
is a $G$-equivariant proper birational morphism $Y \rightarrow X$ such that 
$Y$ is smooth and the $G$-action on $Y$ is regular.
\end{proposition}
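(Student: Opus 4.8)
The plan is to prove \Cref{prop:regularization} by combining two classical tools: equivariant resolution of singularities and the theorem that birational actions of finite groups on proper varieties over a field of characteristic zero can be regularized (made biregular) on a suitable birational model. I would proceed as follows.

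\emph{Step 1: Regularize the action ignoring the map to $T$.} By a theorem going back to Sumihiro in the context of algebraic groups, and in the finite group case essentially elementary (one can take the closure of the graph of the action map $G \times X \dashrightarrow X$, or invoke the general statement that every birational action of a finite group can be made regular after a birational modification), there is a projective variety $X'$ birational to $X$ on which $G$ acts biregularly, compatibly with the given action on function fields. Concretely, let $\varphi \colon X \dashrightarrow X'$ be this birational map; then $X' \dashrightarrow X \to T$ gives a rational map $X' \dashrightarrow T$, which we may assume is a morphism after replacing $X'$ by the closure of its graph in $X' \times T$ (this closure is still $G$-stable since $G$ acts trivially on $T$). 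Thus we have reduced to the case where $G$ acts biregularly on a projective variety over $T$.

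\emph{Step 2: Equivariant resolution.} Now apply $G$-equivariant resolution of singularities in characteristic zero (Hironaka; see also Bierstone--Milman, Villamayor, or Encinas--Villamayor for the functorial/equivariant version): there is a $G$-equivariant projective birational morphism $Y \to X'$ with $Y$ smooth. Because the resolution is canonical/functorial, the $G$-action lifts, and the composite $Y \to X' \to X$ is the desired $G$-equivariant proper birational morphism onto $X$, with $Y$ smooth and the $G$-action on $Y$ regular (and automatically commuting with $Y \to X \to T$).

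\emph{Main obstacle and remarks.} The genuinely substantive input is Step 1 — the regularization of the abstract birational $G$-action into a biregular one on some model — which for finite $G$ in characteristic zero is standard but not entirely trivial; I would cite it rather than reprove it (e.g.\ one reduces to a faithful action of $G$ on a field extension $F(X)/F(T)$ and takes a normal projective model of the fixed field, or uses the fact that $\mathrm{Bir}$ and $\mathrm{Aut}$ of a general type variety agree on a good model, but the cleanest reference is the general statement about finite subgroups of birational automorphism groups). Once that model exists, getting compatibility with the morphism to $T$ is the routine graph-closure argument noted above, and smoothing is just functorial equivariant Hironaka. One should note that properness of $Y \to X$ is automatic: it factors as $Y \to X' $ (projective, hence proper) followed by the projection from the graph closure to $X$ (proper since $X' \times T \to X$ is... more carefully, $Y \to X$ is proper because $Y$ is proper over $F$ and $X$ is separated over $F$), so no extra care is needed there.
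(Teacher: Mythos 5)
Your overall strategy (regularize the finite birational action on some model, then apply equivariant Hironaka) is the same one the paper uses, which simply cites Kraft's regularization together with equivariant resolution. However, your write-up has a genuine gap at the last step: the statement of \Cref{prop:regularization} asks for a proper birational \emph{morphism} $Y \rightarrow X$, and your construction never produces one. In Step 1 you replace $X'$ by the closure of the graph in $X' \times T$, which only gives you a morphism to $T$; the map $X' \dashrightarrow X$ remains merely birational, so the claimed ``composite $Y \to X' \to X$'' does not exist as a morphism. This is not a cosmetic point: in the paper the proposition is used precisely through the morphism to $X$ (e.g.\ in \Cref{ginvariants}, where one needs $R^i(\pi\circ\mu)_*\mathcal{O}_{\tilde{\Xc}} \cong R^i\pi_*\mathcal{O}_{\Xc}$ for an honest resolution $\mu$ of $\Xc$, using rational singularities), so a smooth $G$-model that is only birational to $X$ is not enough. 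Your closing remark that properness of $Y \to X$ ``is automatic'' also presupposes the morphism whose existence is the missing step (and $Y$ need not be proper over $F$, since $T$ is not assumed proper; what one actually uses is that $Y \to T$ is proper and $X \to T$ is separated).

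The missing idea is an \emph{equivariant} elimination of indeterminacy of $Y \dashrightarrow X$, where $G$ acts regularly on the source but only birationally on the target. One standard fix: inside the $|G|$-fold fiber product $X \times_T X \times_T \cdots \times_T X$ take the closure $\Gamma$ of the image of the rational map $x \mapsto (g(x))_{g \in G}$; then $G$ acts regularly on $\Gamma$ by permuting the factors (this preserves $\Gamma$, as one checks on the dense open set where all $g \in G$ are defined), and the projection to the factor indexed by the identity is a proper birational morphism $\Gamma \to X$ intertwining the regular action on $\Gamma$ with the birational action on $X$. Alternatively, starting from your $X'$ with regular $G$-action, write the graph closure of $X' \dashrightarrow X$ as the blow-up of an ideal sheaf $I \subseteq \mathcal{O}_{X'}$ and blow up instead the $G$-invariant ideal $\prod_{g \in G} g^*I$, which dominates the graph and hence maps to $X$, with the $G$-action lifting because the center is $G$-stable. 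Either way, one then finishes exactly as you propose, by applying functorial $G$-equivariant resolution of singularities to obtain a smooth $Y$ with regular $G$-action and a proper birational morphism $Y \to X$ over $T$.
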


\noindent This follows from \cite[Section 1.3]{Kraft18} and equivariant resolution of singularities
in characteristic zero.  We will make use of this regularization result 
without further comment.

Our first result shows that the degree of irrationality can only decrease 
under specialization in families of surfaces with rational singularities
such that the special fiber is ruled.

\begin{lemma}
\label{ruled}
Let $\pi: \Xc \rightarrow B$ be a flat proper morphism such that every fiber is an integral surface with rational singularities. 
Suppose that some fiber $\Xc_{0}$
is ruled. If the very general fiber of $\pi$ 
satisfies $\irr(\Xc_b) \leq d$,
then $\irr(\Xc_0) \leq d$.
\end{lemma}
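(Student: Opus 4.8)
The plan is to reduce the statement to the known behaviour of gonality in families of \emph{curves}. The main point is that, although only the single fiber $\Xc_0$ is assumed ruled, the hypotheses force \emph{every} fiber to be ruled, over a curve of the same genus. Set $q := h^1(\Xc_0,\O_{\Xc_0})$. Since $\Xc_0$ has rational singularities, the $h^i(\Xc_0,\O_{\Xc_0})$ agree with the corresponding invariants of any resolution; as $\Xc_0$ is birational to $\P^1\times C_0$, this gives $h^2(\Xc_0,\O_{\Xc_0}) = 0$ (ruled surfaces have geometric genus $0$) and $q = g(C_0)$. Because $\pi$ is flat and proper, $\chi(\O_{\Xc_b}) = 1-q$ for all $b$, and upper semicontinuity of $b\mapsto h^2(\Xc_b,\O_{\Xc_b})$ together with $h^2(\Xc_0,\O_{\Xc_0}) = 0$ shows $h^2(\Xc_b,\O_{\Xc_b}) = 0$, hence $h^1(\Xc_b,\O_{\Xc_b}) = q$, for all $b$ in a dense open $U\ni 0$. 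Using again that $\Xc_b$ has rational singularities, a resolution of $\Xc_b$ is a smooth projective surface with geometric genus $0$ and irregularity $q$, for every $b\in U$.

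Next I split on $q$. If $q = 0$ then $\Xc_0$ is birational to $\P^1\times\P^1$, so $\irr(\Xc_0) = 1\le d$. If $q = 1$ then $C_0$ is elliptic, so $\irr(\Xc_0)\le\gon(C_0) = 2$; moreover $h^1(\Xc_b,\O_{\Xc_b}) = 1\ne 0$ implies the very general fiber is irrational, so $d\ge\irr(\Xc_b)\ge 2$, and thus $\irr(\Xc_0)\le 2\le d$. So assume $q\ge 2$. By the Enriques--Kodaira classification, a smooth projective surface with $\kappa\ge 0$ has $\chi(\O)\ge 0$, i.e. $p_g\ge q-1\ge 1$; since the resolutions of the fibers $\Xc_b$ ($b\in U$) have $p_g = 0$, they must have $\kappa = -\infty$ and hence be ruled, over a \emph{smooth} curve $C_b$ of genus $q$, which by Tsen is unique and satisfies $\Xc_b\sim_{\mathrm{bir}}\P^1\times C_b$.

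It remains to organize the curves $C_b$ into a family and invoke the curve result. Because the fibers $\Xc_b$ ($b\in U$) have rational singularities with $h^0(\O) = 1$, $h^1(\O) = q$ constant and $h^2(\O) = 0$, one checks that (after a harmless finite base change of $B$ providing a section) the relative Picard scheme $\Pic^0_{\Xc_U/U}$ is an abelian scheme over $U$, and the associated relative Albanese morphism restricts on each fiber to the Albanese map $\Xc_b\to\on{Alb}(\Xc_b) = \on{Jac}(C_b)$, whose image is the Abel--Jacobi curve, canonically isomorphic to $C_b$ (here $C_b$ is recovered from $\Xc_b$ since the fibers of a resolution of a rational surface singularity are trees of rational curves and contract under any map to an abelian variety). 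Its image $\mathcal{C}$ is therefore, after shrinking $U$ around $0$, a proper family of integral curves over $U$ with $\mathcal{C}_b\cong C_b$ for all $b\in U$; in particular $\mathcal{C}_0\cong C_0$. By \cite[Theorem B]{CS20}, $\gon(\mathcal{C}_0)\le\gon(\mathcal{C}_b)$ for $b$ very general. For such $b$, invoking the known value $\irr(\P^1\times C) = \gon(C)$ for $C$ of positive genus, we get $\gon(C_b) = \irr(\Xc_b)\le d$. Combining, $\gon(C_0) = \gon(\mathcal{C}_0)\le\gon(\mathcal{C}_b) = \gon(C_b)\le d$, hence $\irr(\Xc_0)\le\gon(C_0)\le d$.

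The main obstacle is the last paragraph: one must produce the family $\mathcal{C}\to U$ so that $\Xc_0$ \emph{itself} occurs as a fiber (so that the curve specialization result applies at $b = 0$), rather than merely over an uncontrolled dense open. This is precisely where rational singularities enter — they make $h^i(\Xc_b,\O_{\Xc_b})$ behave like the invariants of a smooth surface and keep the relative Picard/Albanese construction smooth and proper over all of $U$, including $b = 0$; for merely log canonical singularities such as a cone over an elliptic curve the fiber cohomology is \emph{not} that of a resolution and the reduction fails, consistent with the failure of the conclusion there.
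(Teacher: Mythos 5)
Your argument is correct and follows the same skeleton as the paper's proof: both reduce to gonality specialization in a family of curves obtained as the image of a relative Albanese map built from $\Pic^0(\Xc/B)$ (using rational singularities to identify the cohomology and Albanese of each fiber with those of a resolution), and both finish with $\irr(\Xc_0) = \gon(C_0)$ via $\Xc_0 \sim_{\mathrm{bir}} C_0 \times \P^1$ \cite{CM23}. The differences are in two intermediate steps. First, you obtain constancy of $h^1(\O_{\Xc_b})$ near $0$ from constancy of $\chi$ plus semicontinuity of $h^2$, whereas the paper gets it on all of $B$ from the Du Bois property of rational singularities \cite[Theorem 2.62]{Kollar23}; both suffice. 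Second, and more substantively, you prove via the Enriques classification ($\chi(\O) < 0$ for $q \geq 2$, with the $q = 0,1$ cases split off by hand --- the $q=1$ split is genuinely needed on your route because of bielliptic surfaces) that \emph{every} nearby fiber is ruled over a genus-$q$ curve, so that $\irr(\Xc_b) = \gon(C_b)$ exactly; the paper never shows the general fiber is ruled, instead using only that $\Xc_b$ dominates the curve $\mathcal{C}_b$, which gives the inequality $\irr(\Xc_b) \geq \gon(\mathcal{C}_b)$ by \cite[Theorem 2]{MH82}. The paper's route is shorter and avoids the case analysis on $q$; yours trades the citation to \cite{MH82} for classical surface classification and yields the slightly stronger conclusion that all nearby fibers are ruled. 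One caveat: the step you label as the ``main obstacle'' --- that the Albanese-image family $\mathcal{C} \to U$ is an honest family of integral (or smooth) curves with $\mathcal{C}_0 \cong C_0$, so that \cite[Theorem B]{CS20} applies at $b=0$ --- is only sketched (``one checks that''); the paper spends the corresponding effort citing representability of $\Pic^0(\Xc/B)$ as an abelian scheme and arguing via upper semicontinuity of fiber dimension and generic smoothness, so you should fill in that step at a comparable level of detail, but it is not a gap in the approach.
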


\begin{proof}
Since it does not matter for the statement of the lemma, after a finite
base-change we may assume that $\pi$ has a section. Rational singularities are
Du Bois, so $h^{1}(\O_{\Xc_{b}})$
is independent of $b \in B$ by \cite[Theorem 2.62]{Kollar23}. Thus, under our assumptions,
the Picard scheme $\Pic^{0}(\Xc/B)$ exists and is an abelian scheme 
\cite{Kleiman05}.  The dual abelian scheme (which may be naturally 
constructed as the relative Picard scheme of this family itself) is the
Albanese of the family $\Xc/B$. Using the fact that $\pi$ has a section,
we therefore have an Albanese map of $B$-schemes 
$\mathrm{alb}:\Xc \rightarrow \Pic^{0}(\Xc/B)^{\vee}$.

Note that for a variety $X$ with rational singularities, it is straightforward to check that if $\mu \colon \tilde{X} \rightarrow X$ is a resolution of singularities, then the Albanese varieties for $\tilde{X}$ and $X$ are isomorphic and their Albanese maps commute with $\mu$. Thus, there is no ambiguity when we consider the Albanese of a fiber $\Xc_{b}$ versus that of a resolution.

Next, we may assume that the constant $h^{1}(\O_{\Xc_{b}})$ is positive; otherwise, 
the special fiber is a ruled surface with irregularity zero and 
therefore a rational surface, and the conclusion holds automatically.
Thus, we assume that the Albanese $\Pic^{0}(\Xc/B)^{\vee}$ has 
positive dimension over $B$. Since the special fiber is ruled, 
$\Xc_0$ is birational to $C_0 \times \mathbb{P}^1$ for some smooth 
curve $C_0$ of positive genus. By the previous paragraph, the 
image of the Albanese of $\Xc_0$ is a smooth curve isomorphic to $C_0$.

Since the morphism $\mathcal{C} \coloneqq \mathrm{im}(\mathrm{alb})
\rightarrow B$ is proper
and we know the fiber over $0 \in B$ is isomorphic to $C_0$, by the upper
semicontinuity of fiber dimension and the fact that all fibers have positive
irregularity, the image of the Albanese of any fiber $\Xc_b$ is a curve. 
By generic smoothness, these curves are smooth in a neighborhood of $0 \in B$.  Hence the image 
$\mathcal{C}$ of the Albanese is a family of smooth 
curves over $B$ (after shrinking $B$ if necessary). Gonality can 
only decrease under specialization in smooth families of curves, so
$\irr(\mathcal{C}_b) \geq \irr(C_0)$. We therefore have
\[ \irr(\Xc_b) \geq \irr(\mathcal{C}_b) \geq \irr(C_{0}) = \irr(\Xc_{0}), \]
where the first inequality holds because $\Xc_b$
admits a dominant morphism to the curve $\mathcal{C}_b$ \cite[Theorem 2]{MH82} 
and the last equality is true since 
$\Xc_{0} \simeq_{\text{bir.}} C_{0} \times \P^{1}$ \cite[line 1.1]{CM23}.
\end{proof}

\begin{remark}
\label{spec_rationality}
Note that when $d = 1$, \Cref{ruled} shows that rationality specializes
in families of surfaces with rational singularities.  Indeed, in this case
the special fiber is then ruled by Matsusaka's theorem \cite[Theorem IV.1.6]{Kollar96}
and the irregularity must be zero. 
This specialization property was previously proven for klt surfaces
(and threefolds) \cite[p. 13]{Totaro16}.
Beyond rational singularities, rationality does not specialize in families
of surfaces. For instance, a smooth cubic surface can degenerate
to a cone over an elliptic curve, which is ruled but not rational and
has a log canonical singularity.
\end{remark}

We next consider families of varieties with (rational) $G$-actions
where the special fiber is not ruled. 
We will assume from now on that $G$ is a finite group. 
For the purposes of considering the degree of irrationality,
this is the situation we are most interested in.
We also note in passing that varieties with actions by
positive-dimensional linear algebraic groups are ruled. 
Indeed, if $G$ is a positive-dimensional linear algebraic group
with a faithful rational action on a variety $X$, then $X$ is ruled 
since $G$ contains a copy of a $\mathbb{G}_{\on{a}}$ or a 
$\mathbb{G}_{\on{m}}$ (cf. \cite[IV.1.17.5]{Kollar96}).

Now we are ready to show:

\begin{theorem}
\label{ginvariants}
Let $\pi: \Xc \rightarrow B$ be a flat proper morphism of varieties
such that every fiber is integral with rational singularities. Let $A \subseteq B$ be the
subset of closed points $b \in B$ for which $\Xc_b$ is not uniruled. Suppose $\Xc$ has sustained uniruled modifications over fibers $\Xc_{b}$ with $b \in A$ and there is a birational action $G \subseteq \Bir(\Xc/B)$ by a finite group $G$. Then for every positive 
integer $i$ and every $b \in A$, $G$ acts on $H^i(\Xc_b,\mathcal{O}_{\Xc_b})$ and the 
collection of $G$-representations $\{H^i(\Xc_b,\mathcal{O}_{\Xc_b}): b \in A\}$ is constant 
up to isomorphism.  In particular, $\dim(H^i(\Xc_b,\mathcal{O}_{\Xc_b})^G)$ is
the same for each $b \in A$.
\end{theorem}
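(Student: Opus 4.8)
The plan is to prove the stronger statement that for each $b\in A$ the character $\chi^i_b\colon g\mapsto\on{tr}\bigl(g\mid H^i(\Xc_b,\OO_{\Xc_b})\bigr)$ is one and the same function of $g\in G$; since a representation of a finite group over $\C$ is determined by its character, this yields constancy of the collection $\{H^i(\Xc_b,\OO_{\Xc_b})\}_{b\in A}$, and constancy of $\dim H^i(\Xc_b,\OO_{\Xc_b})^G$ then follows by averaging the character over $G$. The first point to address is that $G$ actually acts on $H^i(\Xc_b,\OO_{\Xc_b})$ when $b\in A$, which is not obvious because an element $g\in G$ may have indeterminacy along all of $\Xc_b$. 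To fix this, localize at $b$ with local ring $R$, regularize the $G$-action on $\Xc_R$ (\Cref{prop:regularization}), and then, after a generically finite extension $R\subseteq R'$ --- harmless because it leaves the fiber $\Xc_b$ unchanged (the residue field remains $\C$) and is permitted precisely because ``sustained uniruled modifications'' is stable under such extensions --- perform a $G$-equivariant semistable reduction. This produces a regular scheme $\Yc$, proper over $R'$, with a regular $G$-action over $R'$, whose closed fiber $\Yc_0=\sum_j E_j$ is a reduced simple normal crossing divisor on which $G$ acts by permuting the components $E_j$.

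Next I would sort out the geometry of $\Yc_0$. Since $\Yc\to\Xc_{R'}$ is a proper birational modification of the total space and $\Xc_0$ is a prime divisor of $\Xc_{R'}$, each component $E_j$ of $\Yc_0$ either maps onto $\Xc_0$ --- in which case it is the unique strict transform $E_0$ and $E_0\to\Xc_0$ is a resolution of singularities --- or is contracted by $\Yc\to\Xc_{R'}$, hence is an exceptional divisor of a modification of $\Xc_{R'}$, hence uniruled by the sustained uniruled modifications hypothesis at $b\in A$. As $\Xc_0$ is not uniruled, $E_0$ is the unique non-uniruled component of $\Yc_0$; because $G$ permutes components and preserves uniruledness, $G$ fixes $E_0$ and permutes $\{E_1,\dots,E_n\}$ among themselves. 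The component $E_0$ is smooth projective with a regular $G$-action, and pullback along the resolution $E_0\to\Xc_0$ gives an isomorphism $H^i(\Xc_0,\OO_{\Xc_0})\cong H^i(E_0,\OO_{E_0})$ (using that $\Xc_0$ has rational singularities); transporting along it defines the $G$-action on $H^i(\Xc_0,\OO_{\Xc_0})$, and this action is independent of all the above choices because $H^i(-,\OO)$ is a birational invariant of smooth projective varieties, functorially for birational maps.

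It remains to compare this character with the one on a general fiber, and here the input is that all fibers of $\pi$ have rational singularities, hence are Du Bois, so that $R^i\pi_*\OO_\Xc$ is locally free with formation commuting with base change (as in the proof of \Cref{ruled}, via \cite[Theorem 2.62]{Kollar23}). In particular $\dim H^i(\Xc_b,\OO_{\Xc_b})$ is already constant on all of $B$, and the problem reduces to matching the $\rho$-isotypic multiplicities of $H^i(\Xc_b,\OO_{\Xc_b})$ for the irreducible representations $\rho$ of $G$, which is a question about the cohomology of coherent sheaves on a quotient. Concretely, one forms the quotient $\Zc\colonequals\Yc/G$ with finite quotient map $q$, decomposes $q_*\OO_\Yc=\bigoplus_{\rho\in\widehat G}\mathcal F_\rho$ into isotypic summands of coherent $\OO_\Zc$-modules, and reads off the multiplicities on the fibers from the cohomology of the $\mathcal F_\rho$ along the simple normal crossing fiber $\Yc_0$; the uniruled components $E_1,\dots,E_n$ and their mutual intersections contribute a correction that, by the sustained uniruled modifications hypothesis, is matched by the corresponding contribution in the degeneration of the generic fiber, so that after subtracting it the multiplicities on $\Xc_0$ and on the generic fiber coincide.

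The step I expect to be the main obstacle is this last comparison, i.e.\ making the bookkeeping $G$-equivariant. Even non-equivariantly one must control, along $\Yc_0$, the Mayer--Vietoris (equivalently, limit mixed Hodge) spectral sequence $E_1^{p,q}=\bigoplus_{|I|=p+1}H^q(E_I,\OO_{E_I})\Rightarrow H^{p+q}(\Yc_0,\OO_{\Yc_0})$, keeping track of its differentials and of any $d_{\ge 2}$; equivariantly one must further identify, as a $G$-representation, the part of this contribution not coming from $E_0$ with the analogous part for a smooth model of the generic fiber, using that $H^i(\Yc_0,\OO_{\Yc_0})$ is the weight-zero / $\mathrm{gr}^0_F$ piece of the ($G$-equivariant) nearby cohomology. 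Granting this, all isotypic multiplicities --- and hence the characters $\chi^i_b$ and the representations $H^i(\Xc_b,\OO_{\Xc_b})$ --- are constant on $A$, proving the claim.
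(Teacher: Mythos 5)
Your first half is fine, and in fact it reconstructs by hand what the paper gets from the specialization results of \cite{CJS22}: after a $G$-equivariant modification of $\Xc_{R'}$ with SNC special fiber, exactly one component $E_0$ dominates $\Xc_0$ (and does so birationally), all other components are exceptional hence uniruled by the sustained uniruled modifications hypothesis, and since $\Xc_0$ is not uniruled, $G$ must preserve $E_0$; transporting through $H^i(E_0,\OO_{E_0})\cong H^i(\Xc_0,\OO_{\Xc_0})$ (rational singularities) gives the action on the special fiber. That is a legitimate, if more laborious, substitute for the paper's citation (modulo the unjustified appeal to \emph{equivariant} semistable reduction, which is not actually needed --- an equivariant SNC model suffices for this step).

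The second half, however, is where the theorem actually lives, and there you have a genuine gap that you yourself flag: you assert that the contributions of the uniruled components $E_1,\dots,E_n$ and their intersections to $H^i(\Yc_0,\OO_{\Yc_0})$ form a ``correction that, by the sustained uniruled modifications hypothesis, is matched by the corresponding contribution in the degeneration of the generic fiber,'' but no mechanism is offered for this matching, and the hypothesis gives you nothing of the sort --- uniruledness does not kill $h^i(\OO)$ (a component ruled over a positive-genus curve has $h^1(\OO)\neq 0$), so these terms neither vanish nor obviously cancel, and controlling the equivariant Mayer--Vietoris/limit-MHS spectral sequence and its differentials is precisely the unproved step. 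The paper avoids this entirely: since all fibers have rational singularities and $B$ is smooth, the total space $\Xc$ has rational singularities \cite{Elkik78}, so a $G$-equivariant resolution $\mu\colon\tilde\Xc\to\Xc$ of the \emph{total space} gives $R^i(\pi\circ\mu)_*\OO_{\tilde\Xc}\cong R^i\pi_*\OO_\Xc$ and hence a $G$-equivariant vector bundle structure on $R^i\pi_*\OO_\Xc$ (locally free with fibers $H^i(\Xc_b,\OO_{\Xc_b})$ by \cite[Theorem 2.62]{Kollar23} and Grauert), compatible over $b\in A$ with the specialized action; rigidity of families of representations of a finite (reductive) group \cite[Proposition 1]{Kraft89} then gives constancy at once, with no fiber-by-fiber degeneration analysis. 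To repair your argument you would either have to carry out the equivariant nearby-cycle comparison you defer, or simply replace it with this total-space argument.
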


\begin{proof}
For every $b \in A$, since $\Xc_b$ is not uniruled and each birational map in $G$ respects the fibers of $\pi$, we may apply \cite[Proposition 1.4(5) and Proposition 2.1]{CJS22} to show that the action of $G$ specializes to a birational action on $\Xc_b$. This action agrees with the restriction of the $G$-action on $\Xc$ to a neighborhood of the generic point of $\Xc_b$ (the action is defined there since $\Xc \rightarrow B$ is proper and the generic point of $\Xc_b$ has codimension $1$ in $\Xc$).

The morphism $\pi$ is flat and the dimension $h^{i}(\Xc_{b}, \O_{\Xc_{b}})$ is constant across $b \in B$ \cite[Theorem 2.62]{Kollar23}, so Grauert's theorem implies that $R^{i}\pi_{\ast}\O_{\Xc}$ is a vector bundle on $B$ and that the natural map
\[ (R^{i}\pi_{\ast}\O_{\Xc})_{b} \rightarrow H^{i}(\Xc_{b}, \O_{\Xc_{b}}) \]
is an isomorphism for all $b \in B$. We claim that there is a $G$-action on the vector bundle $R^{i}\pi_{\ast}\O_{\Xc}$ which commutes with the trivial action on the base $B$. To see this, note that since every fiber has rational singularities and $B$ is smooth, $\Xc$ has rational singularities \cite{Elkik78}. Let $\mu \colon \tilde{\Xc} \rightarrow \Xc$ be an equivariant resolution of singularities which regularizes the action of $G$. By the definition of rational singularities,
\[ R^{i}(\pi \circ \mu)_*\O_{\tilde{\Xc}} \cong R^{i}\pi_{\ast}\O_{\Xc}. \]
The left-hand side naturally has the structure of a $G$-equivariant vector bundle 
over $B$ (where $B$ carries the trivial $G$-action), so the right does as well.

We claim that for every $b \in A$, the specialization of the $G$-action to the fiber $\Xc_{b}$ is compatible with the $G$-action on $H^{i}(\Xc_{b}, \O_{\Xc_{b}})$. But this is true by the first paragraph of the proof. Since $R^i \pi_* \mathcal{O}_{\Xc}$ is a $G$-equivariant vector bundle, the fibers 
$(R^i \pi_* \mathcal{O}_{\Xc})_b$ are a family of $G$-representations over $B$.
The group $G$ is reductive, so we can apply \cite[Proposition 1]{Kraft89} 
to conclude that the representations in this family are constant up to isomorphism.
It follows that the dimension of the space of invariants $H^i(\Xc_b,\mathcal{O}_{\Xc_b})^G$, $b \in A$ is constant as well.
\end{proof}

It is important for our applications to allow for families with both ruled and 
non-ruled fibers in \Cref{ginvariants}, since this can occur in klt surface families.
For instance, it is possible for a family of klt surfaces to have rational special fiber with quotient singularities
and general fiber smooth of general type.  These types of smoothings have been
used to create general type surfaces with unusual properties 
(see \cite{LP07,PPS09a,PPS09b}).

\begin{remark}
\label{rem:forms}
When $\pi: \Xc \rightarrow B$ is a \textit{smooth} family satisfying the
assumptions of \Cref{ginvariants}, the global sections of $\Omega_{\Xc_b}^i$
and $\omega_{\Xc_b}^{\otimes i}$ also have constant dimension in $b$.  Indeed, 
Hodge numbers and plurigenera are deformation invariants of smooth complex
varieties. These global sections also inherit $G$-actions from the rational 
$G$-action on the family. A variation of our argument above using the pushforward 
of the relative contangent sheaf and its powers shows that 
the dimension of the $G$-invariant part of $H^0(\Xc_b,\Omega_{\Xc_b}^i)$ is 
constant in $b$ (and similarly for the pluricanonical bundles).  The 
$\Omega_{\Xc_b}^i$ case can also be deduced from \Cref{ginvariants} and 
Hodge symmetry.
\end{remark}

Finally, we will need a result describing the singularities of the total space of a family of klt varieties.

\begin{lemma}
\label{uniruledmod}
Suppse $\Xc \rightarrow B$ is a flat proper morphism such that every fiber is a klt variety.  Then the total
space $\Xc$ has uniruled modifications. 
\end{lemma}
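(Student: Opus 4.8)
The plan is to reduce the lemma to the assertion that the total space \(\Xc\) is itself klt, and then invoke the fact that klt varieties have uniruled modifications (this is \cite{CS24}, and also follows from the minimal model program: any divisor over a klt variety can be extracted by a projective birational morphism whose \(\Q\)-factorial klt source carries the extracted divisor as a divisor of Fano type, hence rationally connected). First I would check that \(\Xc\) is normal, so that ``uniruled modifications'' even makes sense: the fibers of \(\pi\) are klt, hence Cohen--Macaulay and regular in codimension one, the base is regular, and \(\pi\) is flat, so \(\Xc\) is Cohen--Macaulay (in particular \(S_2\)); moreover \(\mathrm{Sing}(\Xc)\) is contained in the union over \(b \in B\) of \(\mathrm{Sing}(\Xc_b)\), each of which has codimension at least two in the corresponding fiber, so \(\mathrm{Sing}(\Xc)\) has codimension at least two in \(\Xc\). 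Thus \(\Xc\) is \(R_1\) and \(S_2\), i.e.\ normal.

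The core of the argument is then to show \(\Xc\) is klt. When \(B\) is a smooth curve: for each closed point \(b \in B\), the fiber \(\Xc_b = \pi^\ast(b)\) is an effective Cartier divisor on \(\Xc\) (the pullback of the Cartier divisor \(b\) on the smooth curve \(B\)), and \(\Xc_b\) is klt by hypothesis. By inversion of adjunction --- in the form that a normal variety carrying a klt Cartier divisor is klt in a neighbourhood of that divisor, \emph{including} that its canonical divisor is \(\Q\)-Cartier there (cf.\ \cite{KM98}) --- the variety \(\Xc\) is klt in a neighbourhood of \(\Xc_b\). Since every closed point of \(\Xc\) lies on some closed fiber \(\Xc_b\), it follows that \(\Xc\) is klt. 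For a base \(B\) of higher dimension one would induct on \(\dim B\): the preimage \(\pi^{-1}(H)\) of a general hyperplane section \(H \subseteq B\) is a Cartier divisor in \(\Xc\) whose fibers are among the (klt) fibers of \(\pi\), hence klt by induction, so inversion of adjunction propagates klt-ness of \(\Xc\) to a neighbourhood of \(\pi^{-1}(H)\), and varying \(H\) covers \(\Xc\).

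The hard part is precisely the \(\Q\)-Gorenstein input above: one must know not merely that \(\Xc\) has rational singularities near a fiber (which is automatic, since the fibers have rational singularities and \(B\) is smooth) but that \(K_\Xc\) is \(\Q\)-Cartier there. That rational singularities alone would not suffice is shown by the cone over an Enriques surface, which has rational singularities (the relevant cohomology vanishes by Kodaira vanishing and \(h^i(\O_S)=0\)) but whose minimal resolution has an exceptional divisor isomorphic to the non-uniruled surface \(S\); it is the klt hypothesis on the fibers --- carried to the total space via the Cartier-divisor form of inversion of adjunction --- that rules this out.
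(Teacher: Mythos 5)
There is a genuine gap at the central step of your argument: the claim that the total space $\Xc$ is klt near each fiber. The definition of klt requires $K_{\Xc}$ to be $\Q$-Cartier, and no form of inversion of adjunction produces this; the standard statement (\cite[Theorem 5.50]{KM98}) takes ``$K_{\Xc}+\Xc_b$ is $\Q$-Cartier'' as a \emph{hypothesis} and only then transfers singularity bounds from the fiber to the total space. Your parenthetical assertion that inversion of adjunction ``includes that its canonical divisor is $\Q$-Cartier there'' is false, and this is not a technicality: there exist flat proper families over a smooth curve whose fibers are all klt surfaces but whose total space has non-$\Q$-Cartier canonical divisor. The classical examples are the non-$\Q$-Gorenstein smoothings of the cone over the rational normal quartic curve (the $\tfrac14(1,1)$ singularity): every fiber is klt, yet $K_{\Xc}$ fails to be $\Q$-Cartier, so $\Xc$ is not klt. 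This is precisely why KSB/KSBA moduli theory must impose the $\Q$-Gorenstein condition as an extra hypothesis rather than deducing it, and the paper's proof opens by acknowledging exactly this issue (``Though the total space $\Xc$ may not be $\Q$-Gorenstein\dots'').

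The way the paper's proof gets around this is the ingredient missing from your proposal: one first replaces $\Xc$ by a \emph{small} birational modification $\mathcal{Y} \rightarrow \Xc$ that is log canonical, in particular $\Q$-Gorenstein, in a neighborhood of the relevant fiber (\cite[Theorem 5.41]{Kollar23}); only on $\mathcal{Y}$ can inversion of adjunction be applied, yielding that $\mathcal{Y}$ is klt near $\mathcal{Y}_0$. Since $\mathcal{Y} \rightarrow \Xc$ is small, any exceptional divisor $E$ over $\Xc$ is still exceptional over $\mathcal{Y}$, and then the Hacon--M\textsuperscript{c}Kernan rational connectedness theorem \cite[Corollary 1.6]{HM07} applied to a resolution $\mu\colon Z \rightarrow \mathcal{Y}$ containing $E$ shows $E$ is uniruled. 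Your final step (klt implies uniruled modifications, via \cite{HM07} or the MMP extraction argument) is fine, and your normality check is fine, but without the small $\Q$-Gorenstein modification the argument does not reach that step. (Incidentally, the base $B$ in this paper is always a smooth curve, so the induction on $\dim B$ is not needed.)
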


\begin{proof}
Though the total space $\Xc$ may not be $\Q$-Gorenstein, we may apply 
\cite[Theorem 5.41]{Kollar23} to find a small birational modification 
$\mathcal{Y} \rightarrow \Xc$ such that $\mathcal{Y}$ is log canonical, and
in particular $\Q$-Gorenstein, in a neighborhood over the fiber $\Xc_0$ 
(see also \cite[Theorem 3.5(a)]{KSB88} in the case $\dim(\Xc/B) = 2$). 
Since the fiber $\mathcal{Y}_0$ is a normal birational modification of $\Xc_0$, it is klt, 
so inversion of adjunction gives that the total space
$\mathcal{Y}$ is also klt near $\mathcal{Y}_0$ \cite[Theorem 5.50]{KM98}.

Now let $E$ be an exceptional divisor over $\Xc$.  We may find a resolution
$Z$ of $\Xc$ containing $E$ which also admits a birational morphism 
$\mu: Z \rightarrow \mathcal{Y}$. Since
$\mathcal{Y} \rightarrow \Xc$ is small, $E$ is also exceptional over 
$\mathcal{Y}$.  By the proof of Shokurov's rational connectedness 
conjecture by Hacon and M\textsuperscript{c}Kernan, $E \cap \mu^{-1}(p)$ is rationally chain connected for any point $p \in \mathcal{Y}$ 
\cite[Corollary 1.6]{HM07}.
This implies that any point in $E \cap \mu^{-1}(p)$ is contained in a rational curve, so $E$ is uniruled \cite[pg. 180-181]{Kollar96}.
\end{proof}

\section{Proofs of main results}

Now we are ready to
prove \Cref{thm:main}.

\begin{proof}[Proof of \Cref{thm:main}]

For surfaces, note that uniruled is equivalent to ruled. Let $\pi \colon \Xc \rightarrow B$ 
be a family of integral klt surfaces with the
property that the very general fiber $\Xc_b$ admits a dominant degree
$d$ Galois cover of $\mathbb{P}^2$. For $b \in B$ such that the fiber $\Xc_{b}$ is ruled, 
we have $\irr(\Xc_{b}) \leq d$ by \Cref{ruled}. Hence, it suffices to prove \Cref{thm:main}
for fibers $b \in B$ where $\Xc_{b}$ is not uniruled.
Assume there exists such a fiber; then by Matsusaka's theorem
\cite[Theorem IV.1.6]{Kollar96},
the very general fiber is also not ruled.  We will denote
by $A \subset B$ the subset of closed points $b \in B$
where the fibers are not ruled (equivalently, not uniruled).
From now on, fix such a point $0 \in A$.

By \cite[Lemma 2.1]{Vial13}, a very general fiber $\Xc_b$
of $\pi$ is isomorphic, as an abstract scheme, to the
geometric generic fiber $\Xc_{\bar{\eta}}$ via a field
isomorphism $\bar{\eta} \cong \C$.  If there is a 
Galois rational map $\Xc_b \dashrightarrow 
\mathbb{P}^2_{\C}$ of degree $d$, we may use the isomorphism
$\bar{\eta} \cong \C$ to obtain isomorphisms $\Xc_{\bar{\eta}} \cong \Xc_b$ and 
$\mathbb{P}^2_{\C} \cong \mathbb{P}^2_{\bar{\eta}}$.
Composing these maps then gives a Galois
rational map of $\bar{\eta}$-varieties
$\Xc_{\bar{\eta}} \dashrightarrow 
\mathbb{P}^2_{\bar{\eta}}$. This rational map is defined on a finite extension
$\eta \subseteq L \subseteq \bar{\eta}$.  After replacing
$\Xc \rightarrow B$ with a suitable
base change (which doesn't affect the special fiber),
we may assume that the generic fiber of $\pi$ is a Galois
cover of $\mathbb{P}^2_{\eta}$.

Since the extension $K(\Xc_{\eta})/K(\mathbb{P}^2_{\eta})$
is Galois, its Galois group $G$ acts by field automorphisms
on $K(\Xc_{\eta})$ with fixed field $K(\mathbb{P}^2_{\eta})$.
It follows that each $g \in G$ gives an $\eta$-birational map 
$\Xc_{\eta} \dashrightarrow \Xc_{\eta}$, i.e., there 
is an inclusion $G \subseteq \mathrm{Bir}_{\eta}(\Xc_{\eta})$.
Spreading out the graph of each of these birational maps, 
we have an action $G \subseteq \Bir(\Xc/B)$. \Cref{uniruledmod} implies that $\Xc$ has \textit{sustained} ruled modifications.  Indeed, a base change does not alter any of the fibers, and uniruled implies ruled in dimension $2$.
Thus, we may apply \Cref{ginvariants} to see that the action of $G$ on $\Xc$ restricts to a birational action of $G$ on $\Xc_{0}$
(as well as every other non-ruled fiber),
and the family of $G$-representations 
$H^1(\Xc_b,\mathcal{O}_{\Xc_b}), b \in A$
is constant up to isomorphism.

We can regularize the $G$-action on $\Xc_{\eta}$
with a birational morphism
$\tilde{\Xc}_{\eta} \rightarrow \Xc_{\eta}$, so
that $G$ acts regularly on the smooth variety $\tilde{\Xc}_{\eta}$.
The quotient 
$\tilde{\Xc}_{\eta}/G$ is a rational variety, since its
field of fractions is $K(\Xc_{\eta})^G = K(\mathbb{P}^2_{\eta})$.

Again applying $G$-equivariant resolution of singularities, we can
spread out to find a regular model
$\tilde{\pi}: \tilde{\Xc} \rightarrow B$ with a regular $G$-action,
such that the generic fiber is 
isomorphic to $\tilde{\Xc}_{\eta}$, the very general fiber
is smooth, and the
special fiber $\tilde{\Xc}_0$ is an snc divisor in $\tilde{\Xc}$.
By spreading out $\varphi$, we also obtain an equivariant
birational map $\varphi_B: \tilde{\Xc} 
\dashrightarrow \Xc$ over $B$. This map is defined at every
codimension $1$ point of $\tilde{\Xc}$, so in
particular it is defined at the generic point of every
irreducible component of a fiber.  Since the rational
$G$-action on $\Xc$ respects fibers, it follows
that the $G$-action on $\tilde{\Xc}$ does as well.
Taking the quotient by $G$, we obtain a flat proper morphism
$\tilde{\Xc}/G \rightarrow B$, where 
$\tilde{\Xc}/G$ is normal and irreducible.  By
Matsusaka's theorem,
every component of the special fiber 
$\tilde{\Xc}_0/G$ is ruled.  One of these components is
birational to $U_0/G$, where $U_0 \subseteq \Xc_0$
is the open set where the rational $G$-action is defined.
Thus $U_0/G$ is ruled.

It remains to show that the variety $U_0/G$ is actually rational, 
so that $\mathrm{irr}(\Xc_0) \leq |G| = d$. For every $b \in A$,
choose a $G$-equivariant resolution of singularities $Y_b \rightarrow \Xc_b$
regularizing the $G$-action.

Since $\Xc_b$ has rational singularities, 
we know that $H^1(Y_b,\mathcal{O}_{Y_b}) \cong H^1(\Xc_b,\mathcal{O}_{\Xc_b})$ 
(and this isomorphism respects the $G$-action).
We claim that $H^1(Y_b,\mathcal{O}_{Y_b})^G = H^1(Y_b/G, \mathcal{O}_{Y_b/G})$.

Indeed, let $f: Y_b \rightarrow Y_b/G$ be the quotient morphism.  Then there
is a split injection $\mathcal{O}_{Y_b/G} \rightarrow f_* \mathcal{O}_{Y_b}$
identifying $\mathcal{O}_{Y_b/G}$ with the $G$-invariant subsheaf 
$(f_* \mathcal{O}_{Y_b})^G$.  
Finite morphisms are affine so the higher direct
images of $f$ vanish and we have $H^i(Y_b,\mathcal{O}_{Y_b}) = 
H^i(Y_b/G,f_*(\mathcal{O}_{Y_b}))$ for each $i > 0$ 
(see, e.g., \cite[Exercise III.8.2]{Hartshorne77}).  Hence
$$H^i(Y_b,\mathcal{O}_{Y_b})^G = H^i(Y_b/G,f_*\mathcal{O}_{Y_b})^G = 
H^i(Y_b/G,(f_* \mathcal{O}_{Y_b})^G) = H^i(Y_b/G,\mathcal{O}_{Y_b/G}).$$
Here the second equality holds because the $G$-invariants functor
is exact (since $G$ is finite and $|G|$ is invertible in $\C$) and 
$H^0(V,\mathcal{F}^G) = H^0(V,\mathcal{F})^G$ for any $G$-equivariant
quasicoherent sheaf $\mathcal{F}$ on $V$ over the trivial action on $V$.

The dimension of the group $H^i(Y_b,\mathcal{O}_{Y_b})^G$
is constant for all $b \in A$ by \Cref{ginvariants}, so the same is
true of the dimension of $H^i(Y_b/G,\mathcal{O}_{Y_b/G})$.
But for a very general $b$, $Y_b/G$ is rational so 
$H^i(Y_b/G,\mathcal{O}_{Y_b/G}) = 0$.  This vanishing therefore holds
for every $b \in A$, in particular when $b = 0$.
Thus $Y_0/G$, which is birational to $U_0/G$, is also rational, 
since it is ruled and has irregularity zero.
Therefore, the rational quotient map
$\Xc_0 \dashrightarrow \Xc_0/G$ by $G$ is a degree $d$ dominant rational map
to a rational surface, and $\irr(\Xc_0) \leq d$, as required.
\end{proof}

Since covers of degree $2$ are always Galois,
it follows that the property of having degree of
irrationality at most $2$ specializes in smooth families
of dimension at most $2$.

\begin{corollary}
\label{cor:spec_irr2}
Let $\pi: \Xc \rightarrow B$ be a flat proper
morphism such that every fiber is an integral klt surface. If the very general fiber has degree of irrationality at most $2$, then 
every fiber has degree of irrationality at most $2$.
\end{corollary}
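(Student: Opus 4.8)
The plan is to deduce this directly from \Cref{thm:main}; the only additional ingredient is the elementary fact that every field extension of degree at most $2$ in characteristic zero is Galois. By hypothesis a very general fiber $\Xc_b$ is a klt surface with $\irr(\Xc_b) \le 2$, so by the definition of the degree of irrationality there is a dominant rational map $\Xc_b \dashrightarrow \P^2$ of some degree $d$ with $1 \le d \le 2$. I would first record that the associated finite extension of function fields $\C(\Xc_b)/\C(\P^2)$ is Galois. This is trivial when $d = 1$. When $d = 2$ the extension is separable (we work over $\C$) and normal: writing $\C(\Xc_b) = \C(\P^2)(\alpha)$ for any $\alpha$ not in the base field, the minimal polynomial of $\alpha$ over $\C(\P^2)$ is quadratic, say $x^2 + bx + c$, and its other root $\alpha' = -b - \alpha$ lies in $\C(\Xc_b)$, so $\C(\Xc_b)$ is a splitting field of an irreducible polynomial over $\C(\P^2)$ and the extension is Galois.

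With this in hand the hypotheses of \Cref{thm:main} are satisfied with this value $d \le 2$, and that theorem yields $\irr(\Xc_b) \le d \le 2$ for every $b \in B$, which is exactly the assertion. I do not expect a genuine obstacle here: all of the work is contained in \Cref{thm:main}, and the corollary is simply the observation that a degree of irrationality of at most $2$ supplies the Galois hypothesis of that theorem at no cost. The only point to state with a little care is that the hypothesis reads ``$\irr(\Xc_b) \le 2$,'' not ``$\irr(\Xc_b) = 2$,'' so the rational case $d = 1$ must also be covered — but that instance is in any event handled by the $d = 1$ version of \Cref{thm:main}, equivalently by \Cref{ruled} together with Matsusaka's theorem.
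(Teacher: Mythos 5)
Your proposal is correct and matches the paper's own reasoning: the corollary is deduced from \Cref{thm:main} precisely because degree $\leq 2$ extensions of function fields in characteristic zero are automatically Galois. The paper gives exactly this one-line justification, so there is nothing further to add.
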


We suspect that the above result is sharp, in the sense that the property of satisfying $\mathrm{irr}(X) \leq d$ might \textit{not} specialize in smooth families of surfaces in the non-Galois case. For instance, the example below describes a family of surfaces where the degree of irrationality is expected to jump from $3$ to $4$ under specialization.

\begin{example}\label{abel_surf}
Let $\pi: (\Ac, \Lc) \rightarrow B$ is a family of $(1,2)$-polarized abelian surfaces such that $B$ passes through some very general point of the moduli space of such abelian surfaces and some fiber $\Ac_0$ is a product $E_1 \times E_2$ of two non-isogenous elliptic curves. By Riemann-Roch, the linear series $|\Lc_{b}|$ is always a pencil. For the fibers $(\Ac_{b}, \Lc_{b})$ which are not products of elliptic curves, it is known that the general element of the pencil $|\Lc_{b}|$ is a smooth genus 3 curve \cite{Barth87}. For abelian surfaces containing a smooth genus 3 curve, Yoshihara has shown that their degree of irrationality is 3 \cite{Yoshihara96}. For most other polarization types, the degree of irrationality of the very general member is $4$ \cite{Chen19, Martin22}, and one might expect the same to be true of $\Ac_{0}$.
This question remains open. We note that 
\Cref{thm:main} does not apply in this setting because the degree $3$ map constructed by Yoshihara is not Galois.

\end{example}

\bibliographystyle{amsalpha}
\bibliography{Biblio}

\end{document}